\documentclass[letterpaper, 10 pt, conference, twocolumn]{ieeeconf} 
\IEEEoverridecommandlockouts
 \pdfoutput=1
\usepackage[utf8]{inputenc}
\usepackage[T1]{fontenc}
\usepackage{url}
\usepackage{amsmath}
\usepackage{amsfonts}
\usepackage{amssymb}
\usepackage{algorithm}
\usepackage{algpseudocode}
\usepackage{bbm}
\usepackage{graphicx}
\usepackage{caption}
\usepackage{subcaption}
\usepackage[usenames, dvipsnames]{color}
\usepackage{xcolor}
\usepackage{cite}
\usepackage{ifthen}
\usepackage{mathtools}
\usepackage{comment}
\newtheorem{theorem}{Theorem}
\newtheorem{corollary}{Corollary}

\newtheorem{lemma}{Lemma}
\newtheorem{remark}{Remark}
\newtheorem{proposition}{Proposition}
\newtheorem{definition}{Definition} 
\newtheorem{example}{Example}
\usepackage{flushend} 
\usepackage{pgfplots}
\pgfplotsset{compat=1.18}

\newboolean{showcomments}
\setboolean{showcomments}{false}

\newcommand{\david}[1]{\ifthenelse{\boolean{showcomments}}
{{(David says: #1)}}{}}
\newcommand{\emma}[1]{\ifthenelse{\boolean{showcomments}}
{\textcolor{VioletRed}{(Emma says: #1)}}{}}
\newcommand{\edit}[1]{\ifthenelse{\boolean{showcomments}}
{{#1}}{}}

\title{\LARGE \bf Positive Observers Revisited}

\author{ {David Ohlin, Anders Rantzer and Emma Tegling}
\thanks{The authors are with the Department of Automatic Control and ELLIIT Strategic Research Area at Lund University, Lund, Sweden. Email: \{{\tt\small{david.ohlin, anders.rantzer, emma.tegling}\}@control.lth.se}}\thanks{This work is partially funded by Wallenberg AI, Autonomous Systems and Software Program (WASP) and the European Research Council (ERC) under the European Union’s Horizon 2020 research and innovation programme under grant agreement No 834142 (ScalableControl).}}

\begin{document}
\maketitle

\begin{abstract}
    The paper shows that positive linear systems can be stabilized using positive Luenberger-type observers. This is achieved by structuring the observer as monotonically converging upper and lower bounds on the state. Analysis of the closed-loop properties under linear observer feedback gives conditions that cover a larger class than previous observer designs. The results are applied to nonpositive systems by enforcing positivity of the dynamics using feedback from the upper bound observer. The setting is expanded to include stochastic noise, giving conditions for convergence in expectation using feedback from positive observers.
\end{abstract}

\section{Introduction}

In this paper, we revisit the basic setting of observer synthesis for positive linear systems. Monotonicity of the positive dynamics naturally admits a structuring of two observers as upper and lower bounds on the state. Within the literature on interval observers the typical approach uses time-varying coordinate transforms \cite{Khan21survey,zhang23survey,ngoc25interval} to obtain positive error dynamics. This then allows observers to be invariantly ordered in relation to the state. In the case of inherently positive linear systems, this can be done without coordinate transformations. Considering the problems of estimation and control jointly, we instead take a direct approach to the design of bounds and feedback, giving linear conditions that ensure stability and positivity of the error dynamics. This resolves the apparent impossibility, as concluded in \cite{aitrami06observation}, of stabilizing feedback from positive observers. The conditions generate valid observers for a larger set of systems than those previously covered in \cite{aitrami06observation} as a result of treating the full, coupled problem. We go on to show that these results are valid beyond inherently positive systems, by using observer feedback to enforce positivity of the closed-loop dynamics. In the more general setting of noisy dynamics, we derive a second set of linear constraints to guarantee stabilization in the presence of stochastic disturbances.

Positive systems are frequent and varied in the study of the natural world. When only certain aspects of the state can be measured but positivity of the system is inherent, it is natural to require positivity of an observer that seeks to reconstruct the full state. In the linear setting, positivity of an estimate $\hat{x}$ is required to guarantee that linear feedback $u = K\hat{x}$ constitutes a valid input. The problem of positive stabilization in continuous time is treated in \cite{deleenheer01stabilization} for single-input systems with certain assumptions on the dynamics. The later work \cite{aitrami07control} solves the problem for a larger class of discrete-time systems, showing how feasible feedback gains can be synthesized via linear programming. Both of these works assume full knowledge of the state. Subsequently, \cite{briat11ILC} gives linear programming conditions for the synthesis of positivity preserving output-feedback controllers within specified gain bounds, without introducing observers.

The observation problem for linear positive systems is initially treated in \cite{hof98compartmental}, where it is applied to compartmental systems used for biological modeling. In \cite{aitrami06observation}, the authors give a linear program for the synthesis of observer gains that ensure positivity and asymptotic convergence to the true state. A negative result regarding the positivity of closed-loop error dynamics under stabilizing linear feedback leads the authors to conclude that stabilization is not possible. In this paper we show that a minor variation of the underlying assumptions allows for the construction of stabilizing feedback from positive linear state estimates on standard Luenberger form. 

On the other hand, we may desire the system dynamics to behave like a positive system, even when this is not inherent. For example, the recent work \cite{ito25positivizing} develops dissipativity theory to impose positivity around fixed points other than the origin in nonlinear systems of positive quantities. Although the output-based approach taken in the present work differs from~\cite{ito25positivizing}, we adopt the terminology of "positivization" to refer to the enforcing of positive dynamics through feedback. For linear systems with full state information, this problem is treated in~\cite{aitrami05positivization}. This can be done for a multitude of reasons. In the case of physical dynamics, positivization is useful to ensure that states only take values above a certain setpoint. Positivity can also be imposed to leverage the theory of positive systems (see \cite{rantzervalcher18tutorial} for a review of basic properties) when synthesizing controllers and observers, facilitating e.g. scalable distributed calculation of optimal controllers \cite{ohlin25heuristic}. 

The following section sets up the basic premise of discrete-time positive linear systems, together with definitions of used notation. In Section III we introduce the structure of the observers and present the main results of the paper, which give conditions for stabilization using feedback from linear positive state estimates. Section IV discusses the implications of the derived constraints compared to previous results and showcases these in two examples. In Section V, a more realistic setting with state and measurement noise affecting the system is treated. Finally, Section VI summarizes the conclusions and sets out possible directions for future work.

\section{Preliminaries}

\subsection{Notation}
Inequalities are applied element-wise for matrices and vectors throughout. Further, the notation $\mathbb{R}^n_{+}$ is used to denote the closed nonnegative orthant of dimension $n$. The expressions $\mathbf{1}_{p\times q}$ and $\mathbf{0}_{p\times q}$ signify a matrix of ones or zeros, respectively, of the indicated dimension, with subscript omitted when the size is clear from the context. If the dimension is zero, this is to be interpreted as the empty matrix. Let $\rho(A)$ denote the spectral radius of the matrix $A$, equal to the magnitude of the largest eigenvalue of $A$. The matrix $A$ is said to be Schur stable (equivalently, a Schur matrix) if $\rho(A)<1$. When convenient we use the shortened notation $x^+=f(x)$ instead of $x(t+1) = f(x(t))$, dropping the time argument, to indicate the propagation of dynamics. 

\subsection{Problem setup}

This paper concerns the estimation problem for systems
\begin{equation}\label{eq:sys}
    \begin{aligned}
        x(t+1) &= Ax(t) + Bu(t)\\
        y(t) &= Cx(t).
    \end{aligned}
\end{equation}
We restrict ourselves to the class of positive systems, meaning that the nonnegative orthant $x\in\mathbb{R}^n_+$ is invariant under the dynamics \eqref{eq:sys} for all inputs in some set $u(t)\in\mathcal{U}$. For this to hold, we require that $A\ge0$. A variety of alternative definitions are used in the literature, e.g. \cite{rantzervalcher18tutorial} requiring positivity of the state for all nonnegative inputs $u(t)\ge0$. This somewhat restrictive assumption implies that $B\ge0$, preventing the input from stabilizing the system \cite{deleenheer01stabilization}. Defining the set of allowed inputs $\mathcal{U}$ as a compact subset of $\mathbb{R}^n_+$ instead (as in \cite{aitrami06observation}, \cite{ohlin25heuristic}) permits stabilizing feedback that maintains positivity of the closed-loop system. Note that we place no restriction on the sign of the output $y\in\mathbb{R}^p$. Let $\phi(\xi,t)$ denote the trajectory of a system at time~$t$ starting from $x(0) = \xi$. The following is an elementary property of positive linear systems \cite{angeli03monotone}.
\begin{proposition}{\textit{(Monotonicity)}}\label{prop:mono}
    Consider the system ${x(t+1) = Ax(t)}$ with ${A\ge0}$. For ${a, b\in\mathbb{R}^n_+}$ it holds that 
    \begin{equation*}
        a\ge b \implies \phi(a,t)\ge\phi(b,t) \;\;\textnormal{for all}\;\;t.
    \end{equation*}
\end{proposition}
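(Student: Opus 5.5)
The plan is to exploit linearity and reduce the claim to invariance of the nonnegative orthant. Since the system $x(t+1)=Ax(t)$ is linear, its trajectory is $\phi(\xi,t)=A^t\xi$ for all $t\ge 0$. Hence
\begin{equation*}
    \phi(a,t)-\phi(b,t)=A^t a - A^t b = A^t(a-b) = \phi(a-b,t),
\end{equation*}
and the statement $\phi(a,t)\ge\phi(b,t)$ is equivalent to $\phi(a-b,t)\ge 0$. By hypothesis $a\ge b$ element-wise, so $d:=a-b\in\mathbb{R}^n_+$. It therefore suffices to show that every trajectory starting in the nonnegative orthant stays there.

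This last fact I would prove by induction on $t$. For $t=0$, $\phi(d,0)=d\ge 0$. Assume $\phi(d,t)\ge 0$. Then $\phi(d,t+1)=A\phi(d,t)$, and each entry is $\sum_j A_{ij}\phi_j(d,t)$, a sum of products of nonnegative numbers because $A\ge 0$. So $\phi(d,t+1)\ge0$. Equivalently, one can note that $A^t\ge 0$ for all $t$, since products of entrywise nonnegative matrices are nonnegative, and a nonnegative matrix maps $\mathbb{R}^n_+$ into itself.

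Combining the two steps gives $\phi(a,t)-\phi(b,t)=A^t(a-b)\ge0$ for all $t\ge0$, which is the claim. The assumption $a,b\in\mathbb{R}^n_+$ is not actually used; only $a-b\ge0$ and $A\ge0$ matter. None of the steps is a real obstacle. The one point to state carefully is that "for all $t$" means all $t\ge 0$ in forward time: for negative $t$, $A^{-1}$ need not exist or be nonnegative, so monotonicity is only claimed along forward trajectories.
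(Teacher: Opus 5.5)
Your proof is correct and complete. The paper gives no proof of this proposition; it cites it as an elementary property from monotone systems theory (\cite{angeli03monotone}). That framework covers nonlinear systems, where monotonicity is derived from cooperativity-type conditions on the vector field or map. For linear systems it reduces to exactly your argument: by linearity, $\phi(a,t)-\phi(b,t)=A^t(a-b)$, and this is nonnegative because $A^t\ge 0$. So your route is the direct specialization to the linear case, and it is self-contained where the paper relies on the citation. Your side remarks are also right. The hypothesis $a,b\in\mathbb{R}^n_+$ is not needed, since monotonicity of a linear system holds on all of $\mathbb{R}^n$; the paper restricts to the orthant only to match its positive-systems setting. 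And the claim is meant for forward time $t\ge 0$.
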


\vspace{2mm}

\section{Positive observers}

The state is estimated using an observer on the standard Luenberger form
\begin{equation}\label{eq:obs}
    \hat{x}(t+1) = (A-LC)\hat{x}(t) + Ly(t) + Bu(t).
\end{equation}

Despite the feasibility of a positive observer, the following negative result of~\cite{aitrami06observation} shows that it is not possible to use an estimate for which $\mathbb{R}^n_+$ is invariant under~\eqref{eq:obs} to stabilize the system~\eqref{eq:sys}.
\begin{proposition}{(\cite[Lemma 4.1]{aitrami06observation})}\label{prop:aitrami}
    Assume that $A$ is not Schur. Then, there does not exist a positive observer, i.e. an observer for which ${\hat{x}\in\mathbb{R}^n_+}$ is invariant, on the form \eqref{eq:obs} for system \eqref{eq:sys}, such that observer feedback control on the form $u(t) = K\hat{x}$ is asymptotically stabilizing. 
\end{proposition}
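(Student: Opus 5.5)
The plan is to recast the closed loop as one linear system in the stacked state $z=(x,\hat{x})$, show that the positivity requirements make its system matrix entrywise nonnegative, and then use Perron--Frobenius monotonicity to show its spectral radius is at least $\rho(A)\ge 1$. Substituting $u=K\hat{x}$ and $y=Cx$ into \eqref{eq:sys} and \eqref{eq:obs} gives
\begin{equation*}
z(t+1)=\mathcal{A}z(t),\qquad \mathcal{A}=\begin{bmatrix} A & BK\\ LC & A-LC+BK\end{bmatrix}.
\end{equation*}
Asymptotic stabilization means $z(t)\to 0$ for all admissible initial conditions. Since $\mathcal{A}$ is linear and the nonnegative orthant spans $\mathbb{R}^{2n}$, this is equivalent to $\rho(\mathcal{A})<1$. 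So it suffices to show $\rho(\mathcal{A})\ge 1$ whenever the observer is positive.

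The first step is to turn the positivity assumptions into sign conditions, in the sense of \cite{aitrami06observation}. The plant is positive under the closed-loop input, so $\mathbb{R}^n_+$ is invariant for $x$. The observer is positive, so $\hat{x}\in\mathbb{R}^n_+$ is invariant for every nonnegative plant state feeding it through $y=Cx$. Testing these invariances on unit vectors $x(0)=e_i$, $\hat{x}(0)=0$ and $x(0)=0$, $\hat{x}(0)=e_j$ forces each block column of $\mathcal{A}$ to be nonnegative. This gives $A\ge 0$ (already assumed), $LC\ge 0$, $A-LC+BK\ge 0$, and $BK\ge 0$, so $\mathcal{A}\ge 0$. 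I expect this to be the main obstacle. The proposition's phrase ``$\hat{x}\in\mathbb{R}^n_+$ is invariant'' must be read as invariance of the whole coupled closed loop, including that $u=K\hat{x}$ keeps the plant positive, which yields $BK\ge 0$. If only the observer's own invariance is used, one obtains just $LC\ge 0$ and $A-LC+BK\ge0$. In that case I would still try to recover $BK\ge0$, or at least $\mathcal{A}\ge0$ in suitable coordinates, from the requirement that $x$ stays nonnegative along trajectories starting at $x(0)=0$, $\hat{x}(0)=e_j$.

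Given $\mathcal{A}\ge 0$, the remaining step is standard. The block $A$ is a principal submatrix of the nonnegative matrix $\mathcal{A}$, and $\mathcal{A}\ge \operatorname{diag}(A,0)\ge 0$ entrywise. Monotonicity of the spectral radius on nonnegative matrices then gives $\rho(\mathcal{A})\ge\rho(A)\ge 1$, since $A$ is not Schur. The same can be seen directly from Proposition~\ref{prop:mono}. Take the Perron vector $v\ge0$, $v\ne0$, with $Av=\rho(A)v$, and start at $z(0)=(v,0)$. Comparing with the system $z^+=\operatorname{diag}(A,0)z$ gives $x(t)\ge\rho(A)^t v\not\to 0$. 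Hence no gain $K$ makes the loop asymptotically stable, which proves the claim.
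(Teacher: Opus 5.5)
Your proof is correct under the paper's intended reading. The paper does not prove this proposition itself; it quotes it from \cite{aitrami06observation}. The mechanism you isolate is the one the paper points to in Section~IV: positivity of the unstructured closed loop forces the conditions \eqref{eq:generic}, in particular $BK\ge0$ in \eqref{subeq:fbgain}. Your test initial condition $x(0)=\mathbf{0}$, $\hat{x}(0)=e_j$ is exactly what produces that condition.

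Your hedge about a weaker reading, where only the observer's own invariance is required, is unnecessary here. The paper's standing notion of a positive system already requires the plant to stay nonnegative under the admissible feedback. It also lists \eqref{subeq:fbgain} explicitly among the necessary conditions.

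Your finishing step is fine: nonnegativity of the full $2n\times 2n$ matrix plus monotonicity of the spectral radius, or equivalently the comparison $x(t)\ge A^t x(0)$ with a Perron vector. An equally short alternative uses the separation structure. In the coordinates $(x,e)$ with $e=x-\hat{x}$, the closed loop is block triangular with spectrum $\sigma(A+BK)\cup\sigma(A-LC)$. Since $A+BK\ge A\ge0$, this gives $\rho(A+BK)\ge\rho(A)\ge1$.

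That version makes plain that only $BK\ge0$ matters, which is also what the paper's construction exploits. In $\mathcal{X}$, the lower estimate cannot be initialized as $\underline{x}(0)=e_j$ with $x(0)=\mathbf{0}$. So $B\underline{K}\ge0$ is no longer forced, and only $B\overline{K}\ge0$ survives as \eqref{subeq:BK} in Lemma~\ref{lemma:1}.
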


This has discouraged the study of positive observers, as they are seemingly of little practical use. Invariance of the full orthant $\mathbb{R}^n_+$ is, however, not a necessity for linear positive estimation. First observe that, in the noise-free system \eqref{eq:sys}, monotonicity (Proposition~\ref{prop:mono}) implies that any element of the estimate initialized above or below the corresponding element of the state remains so under the update \eqref{eq:obs}. We return later to the case when dynamics and measurements are corrupted by stochastic noise. Within the field of interval observers \cite{Khan21survey}, \cite{zhang23survey}, the synthesis of positive estimation dynamics is for this reason used to provide bounds on the state. This property motivates the separate study of upper and lower bound estimates for positive systems:
\begin{definition}{\textit{(Upper and lower estimates)}}\label{def:estimates}
    Denote by $\overline{x}$ and $\underline{x}$ the \textit{upper} and \textit{lower state estimate} respectively, evolving according to the observer dynamics \eqref{eq:obs} with corresponding gains $\overline{L}$ and $\underline{L}$. 
\end{definition}
With estimates explicitly separated, the requirement on invariance in Proposition \ref{prop:aitrami} can be modified. Let the subspace ${\mathcal{X}\subset\mathbb{R}^{3n}_+}$ be defined as
\begin{equation}
    \mathcal{X} = \{x,\underline{x},\overline{x}\in\mathbb{R}^n_+ : \underline{x}\le x\le\overline{x}\}.
\end{equation}
As will be shown below, the strengthening to require invariance of this subspace in turn relaxes the conditions on the closed-loop operator. This remedies the flaw pointed out in~\cite{aitrami06observation} and allows for stabilization using positive estimates. The tradeoff in terms of information is that initialization of the upper estimate in $\mathcal{X}$ requires prior knowledge of an upper bound on the initial state.

\subsection{Closing the loop}

Adopting a linear state feedback from both estimates, consider the following structure
\begin{equation}\label{eq:u}
    u(t) = \underline{K}\;\underline{x}(t) + \overline{K}\;\overline{x}(t).
\end{equation}
Here, a positive feedback convention is chosen to maintain consistency with the literature on positive systems. Separating the feedback gains $\overline{K}$ and $\underline{K}$ is common practice in the context of interval observers, see \cite{Khan21survey}, \cite{zhang23survey}. In the present work, this structure will be necessary in order to utilize the different properties of the upper and lower bounds. Closing the loop with feedback on the form \eqref{eq:u} yields the extended system
\begin{equation}\label{eq:fullx}
    \!\!\!\!\!\!\begin{bmatrix}
        x \\ \overline{x} \\ \underline{x}
    \end{bmatrix}^+ \!\! = \!\!
    \begin{bmatrix}
        A & B\overline{K} & B\underline{K} \\ \overline{L}C & A-\overline{L}C+B\overline{K} & B\underline{K} \\  \underline{L}C & B\overline{K} & A-\underline{L}C+B\underline{K}
    \end{bmatrix}\!\!\!\!\begin{bmatrix}
        x \\ \overline{x} \\ \underline{x}
    \end{bmatrix}\!\!.\!
\end{equation}
The following lemma gives necessary and sufficient conditions for invariance of $\mathcal{X}$ in the closed-loop system~\eqref{eq:fullx}. 
\begin{lemma}\label{lemma:1}
    Consider the system \eqref{eq:fullx}. The subspace $\mathcal{X}$ is invariant if and only if the following conditions hold:
    \begin{subequations}\label{eq:conditions}
        \begin{align}
        A+B(\overline{K}+\underline{K})&\ge0\label{subeq:ABKK}\\ 
        A+B\overline{K}&\ge0\label{subeq:ABK}\\ B\overline{K}&\ge0\label{subeq:BK}\\ A-\overline{L}C&\ge0\label{subeq:ALCu}\\ A-\underline{L}C&\ge0\label{subeq:ALCl} \\ B\overline{K}+\underline{L}C&\ge0\label{subeq:BKLC}.
        \end{align}
    \end{subequations}
\end{lemma}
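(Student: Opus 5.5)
The plan is to change coordinates so that $\mathcal{X}$ becomes a nonnegative orthant. Then invariance of $\mathcal{X}$ reduces to elementwise nonnegativity of the transformed closed-loop matrix. Concretely, I will introduce
\begin{equation*}
z_1 = \underline{x},\qquad z_2 = x-\underline{x},\qquad z_3 = \overline{x}-x .
\end{equation*}
This is an invertible linear change of variables on $\mathbb{R}^{3n}$. Moreover, $(x,\underline{x},\overline{x})\in\mathcal{X}$ if and only if $z_1,z_2,z_3\ge0$. Indeed, $\underline{x}\ge0$ and $\underline{x}\le x\le\overline{x}$ already force $x,\overline{x}\ge0$, so the membership conditions $x,\overline{x}\in\mathbb{R}^n_+$ in $\mathcal{X}$ are automatic.

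Next I will rewrite \eqref{eq:fullx} in these coordinates using $x=z_1+z_2$, $\overline{x}=z_1+z_2+z_3$ and $\underline{x}=z_1$.
\begin{itemize}
\item From the third block row, $\underline{x}^+ = (A+B\overline{K}+B\underline{K})z_1 + (\underline{L}C+B\overline{K})z_2 + B\overline{K}z_3$.
\item Subtracting the lower-estimate row from the plant row, all feedback terms cancel and $z_2^+ = (A-\underline{L}C)z_2$.
\item Subtracting the plant row from the upper-estimate row, they again cancel and $z_3^+ = (A-\overline{L}C)z_3$.
\end{itemize}
Hence $z^+ = Mz$ with the block upper-triangular matrix
\begin{equation*}
M=\begin{bmatrix} A+B(\overline{K}+\underline{K}) & B\overline{K}+\underline{L}C & B\overline{K}\\ 0 & A-\underline{L}C & 0\\ 0 & 0 & A-\overline{L}C\end{bmatrix}.
\end{equation*}
Carrying out these cancellations is routine bookkeeping, but it is the heart of the argument and I would check it carefully.

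For the equivalence I will use the standard fact that $\mathbb{R}^{3n}_+$ is invariant under $z^+=Mz$ if and only if $M\ge0$. Sufficiency is immediate. For necessity, take $z$ equal to each unit vector $e_j$; then $Me_j$, the $j$-th column of $M$, must be nonnegative. Reading off the blocks, $M\ge0$ is exactly \eqref{subeq:ABKK}, \eqref{subeq:BKLC}, \eqref{subeq:BK}, \eqref{subeq:ALCl} and \eqref{subeq:ALCu}.

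The one point needing care is \eqref{subeq:ABK}, which does not appear directly as a block of $M$. I expect it to be implied by the others, since
\begin{equation*}
A+B\overline{K} = (A-\underline{L}C) + (B\overline{K}+\underline{L}C)\ge0
\end{equation*}
by \eqref{subeq:ALCl} and \eqref{subeq:BKLC}. Alternatively, it follows directly from necessity: take the initial state with $z_1=0$ and $z_2=e_j$, which places the closed-loop state in $\mathcal{X}$ with $x=\overline{x}=e_j$ and $\underline{x}=0$. The plant update is then $x^+=(A+B\overline{K})e_j$, and invariance requires $x^+\ge0$. So \eqref{subeq:ABK} is necessary and redundant, and including it in the list does not change the equivalence. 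This completes both directions.
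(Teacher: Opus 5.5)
Your proof is correct, and it takes a cleaner route than the paper's.

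The paper passes to the coordinates $(x,\overline{e},\underline{e})$ with $\overline{e}=\overline{x}-x$ and $\underline{e}=x-\underline{x}$. In these coordinates $\mathcal{X}$ becomes the cone $\mathcal{X}_e$, which is not an orthant because of the coupling constraint $\underline{e}\le x$. The transformed matrix also contains the sign-indefinite block $-B\underline{K}$. So the paper argues both directions by hand. Sufficiency uses the inequality chain $B\overline{K}\,\overline{x}+\underline{L}Cx\ge(B\overline{K}+\underline{L}C)x\ge(B\overline{K}+\underline{L}C)\underline{x}$ to obtain $\underline{x}^+\ge0$. Necessity uses a handful of specially chosen points of $\mathcal{X}_e$.

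By taking $\underline{x}$ instead of $x$ as the base coordinate, you map $\mathcal{X}$ exactly onto $\mathbb{R}^{3n}_+$. The lemma then reduces to the standard fact that the orthant is invariant under $z^+=Mz$ if and only if $M\ge0$, and necessity follows uniformly from unit vectors. Your first block row of $M$ is exactly what the paper's inequality chain computes implicitly.

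Your route also gives something the paper does not state. Condition \eqref{subeq:ABK} follows from \eqref{subeq:ALCl} and \eqref{subeq:BKLC} with no sign assumption on $A$. This is sharper than the paper's remark that it is redundant under $A\ge0$, and it shows the condition is redundant in the positivization setting as well. The same coordinates would also make the noise conditions \eqref{eq:newconditions} of Theorem~\ref{th:noise} read literally as nonnegativity of the constant term $(\underline{L}F\mathbf{1},\,E\mathbf{1}-\underline{L}F\mathbf{1},\,\overline{L}F\mathbf{1}-E\mathbf{1})$.

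What the paper's choice buys is that the true state and the two estimation errors appear as coordinates, which is the usual separation form for observer-based control. The diagonal blocks, and hence the closed-loop spectrum, are the same in both versions.
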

\begin{proof}
    Define the errors $\underline{e} = x-\underline{x}$ and $\overline{e} = \overline{x}-x$. Invariance of $\mathcal{X}$ for the state and estimates is equivalent to invariance of the cone
\begin{equation}
    \mathcal{X}_e = \{x,\underline{e},\overline{e}\in\mathbb{R}^n_+ : \underline{e}\le x\}.
\end{equation}
under the extended error dynamics
\begin{equation}\label{eq:errors}
    \!\!\!\!\!\begin{bmatrix}
        x \\ \overline{e} \\ \underline{e}
    \end{bmatrix}^+ \!\!=\!\! 
    \begin{bmatrix}
        A+B(\overline{K}+\underline{K}) & B\overline{K} & -B\underline{K} \\ 0 & A-\overline{L}C & 0 \\ 0 & 0 & A-\underline{L}C
    \end{bmatrix}\!\!\!\!\begin{bmatrix}
        x \\ \overline{e} \\ \underline{e}
    \end{bmatrix}\!\!.\!
\end{equation}

(${i\textnormal{)}\implies\textnormal{(}ii}$): In~\eqref{eq:errors}, it is apparent that positivity of the diagonal elements $A-\overline{L}C$ and $A-\underline{L}C$ is necessary and sufficient for positivity of the errors. Inspecting the first row of \eqref{eq:errors}, we see that the stated conditions are sufficient to ensure ${x^+\ge0}$ for all ${(x,\overline{e},\underline{e})\in\mathcal{X}_e}$. For sufficiency, it remains to show that the conditions imply ${x^+\ge\underline{e}^+}$ {(or equivalently, ${\underline{x}^+\ge0}$)}, which yields the inequality
\begin{equation}\label{eq:lowerpos}
    B\overline{K}\overline{x}+\underline{L}Cx+(A+B\underline{K}-\underline{L}C)\underline{x}\ge0.
\end{equation}
The ordering ${\underline{x}\le x\le\overline{x}}$ together with conditions \eqref{subeq:BK} and~\eqref{subeq:BKLC}, respectively, imply the following successive lower bounds:
\begin{equation*}
    B\overline{K}\overline{x}+\underline{L}Cx\ge (B\overline{K}+\underline{L}C)x\ge(B\overline{K}+\underline{L}C)\underline{x}.
\end{equation*}
Substituting this in \eqref{eq:lowerpos} results in ${(A+B(\overline{K}+\underline{K}))\underline{x}\ge0}$, which holds on all of $\mathcal{X}_e$ under \eqref{subeq:ABKK}.

(${ii\textnormal{)}\implies\textnormal{(}i}$): As in the above argument for necessity, we examine two cases; first, take ${\underline{e}=\mathbf{0}}$. Necessity of ${A+B(\overline{K}+\underline{K})\ge0}$ in \eqref{eq:errors} is clear when we also have ${\overline{e}=\mathbf{0}}$. Further, if $B\overline{K}$ contains any negative elements, then we can always violate positivity of the dynamics by finding a sufficiently large value of the corresponding element of $\overline{e}$, without leaving $\mathcal{X}_e$. Thus ${B\overline{K}\ge0}$ is necessary. In the second case, let ${\underline{e} = x}$. If ${\overline{e} = \mathbf{0}}$, the closed-loop dynamics of the state become $x^+ = (A+B\overline{K})x$, showing necessity of ${A+B\overline{K}\ge0}$. Finally, consider again the inequality \eqref{eq:lowerpos}, with $\underline{x}=\mathbf{0}$ and $\overline{x}=x$. This shows that the constraint $B\overline{K}+\underline{L}C\ge0$ is also necessary to guarantee invariance of $\mathcal{X}_e$. 
\end{proof}
\begin{remark}
    The final condition \eqref{subeq:BKLC} of Lemma~\ref{lemma:1} is somewhat surprising, as it couples the design of observer and feedback gains. Imposing the additional requirement that ${\underline{L}C\ge0}$ decouples the design problems as a consequence of~\eqref{subeq:BK}. This is used in Theorem~\ref{th:stable} to illustrate that stabilization is possible by combining existing synthesis methods for observer and feedback gains. However, as we show below in Section IV, the relaxed condition \eqref{subeq:BKLC} expands the class of systems that can be stabilized using positive observers beyond previous results.
\end{remark} 
\begin{remark}
    The reader may notice that~\eqref{subeq:ABK} is in fact redundant under the assumption~$A\ge0$. We keep it here nonetheless to facilitate the later discussion of the consequences of lifting the assumption of positivity. 
\end{remark}
With these conditions in place, we are ready to state the first of our main results.
\begin{theorem}\label{th:stable}
    The following statements are equivalent:
    \begin{itemize}
        \item[$(i)$] There exist matrices $K\in\mathbb{R}^{m\times n}$ and $L\in\mathbb{R}^{n\times p}$ such that $A-LC$ and $A+BK$ are Schur, $A-LC\ge0$, $A+BK\ge0$ and $LC\ge0$.
        \item[$(ii)$] The system \eqref{eq:sys} can be stabilized with positive estimates $\overline{x}$ and $\underline{x}$ with $\underline{L}C\ge0$, such that $\mathcal{X}$ {is} invariant under the dynamics \eqref{eq:fullx}.
    \end{itemize}
\end{theorem}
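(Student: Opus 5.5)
Both directions will go through Lemma~\ref{lemma:1} together with the block-triangular error dynamics~\eqref{eq:errors}. The spectrum of the closed-loop matrix in~\eqref{eq:errors} is the union of the spectra of its diagonal blocks $A+B(\overline{K}+\underline{K})$, $A-\overline{L}C$ and $A-\underline{L}C$. The change of variables $(x,\overline{x},\underline{x})\mapsto(x,\overline{e},\underline{e})$ is an invertible linear map, so~\eqref{eq:fullx} is asymptotically stable if and only if these three blocks are Schur.

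For $(i)\Rightarrow(ii)$, given $K$ and $L$ as in $(i)$, I will take
\[
\overline{L}=\underline{L}=L,\qquad \overline{K}=0,\qquad \underline{K}=K.
\]
The conditions of Lemma~\ref{lemma:1} then check out one by one:
\begin{itemize}
\item \eqref{subeq:ABKK} reads $A+BK\ge0$, which is given.
\item \eqref{subeq:ABK} reads $A\ge0$, which holds by the standing assumption.
\item \eqref{subeq:BK} reads $0\ge0$, which is trivial.
\item \eqref{subeq:ALCu} and \eqref{subeq:ALCl} read $A-LC\ge0$, which is given.
\item \eqref{subeq:BKLC} reads $LC\ge0$, which is given; this also supplies the extra requirement $\underline{L}C\ge0$ in $(ii)$.
\end{itemize}
Hence $\mathcal{X}$ is invariant. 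Stability follows because the diagonal blocks are $A+BK$ and two copies of $A-LC$, all Schur. Putting the whole feedback on the lower estimate is the natural choice, since $\overline{K}$ must satisfy the sign constraint $B\overline{K}\ge0$, while $\underline{K}$ is unconstrained apart from~\eqref{subeq:ABKK}.

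For $(ii)\Rightarrow(i)$, suppose gains $\overline{K},\underline{K},\overline{L},\underline{L}$ give a stabilizing closed loop with $\mathcal{X}$ invariant and $\underline{L}C\ge0$. By the necessity direction of Lemma~\ref{lemma:1}, all conditions~\eqref{eq:conditions} hold. I will set
\[
K=\overline{K}+\underline{K},\qquad L=\underline{L}.
\]
Then $A+BK\ge0$ by~\eqref{subeq:ABKK}, $A-LC\ge0$ by~\eqref{subeq:ALCl}, and $LC\ge0$ by hypothesis. Asymptotic stability of~\eqref{eq:fullx} forces every diagonal block of~\eqref{eq:errors} to be Schur, so $A+BK$ and $A-LC$ are Schur.

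The main subtlety is the meaning of ``stabilized'' in $(ii)$. If it means convergence only for initial conditions in $\mathcal{X}$, I still need every block to be Schur. The cone $\mathcal{X}_e$ has nonempty interior in $\mathbb{R}^{3n}$, since $\underline{e}<x$ can hold strictly with $\overline{e}>0$ arbitrary. It therefore spans $\mathbb{R}^{3n}$, and convergence on a spanning set implies convergence everywhere by linearity. With that remark both directions are routine, so I expect no deeper obstacle.
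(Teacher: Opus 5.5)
Your proof is correct and follows the paper's argument: the same choice $\overline{L}=\underline{L}=L$, $\overline{K}=\mathbf{0}$, $\underline{K}=K$ for $(i)\Rightarrow(ii)$, and $K=\overline{K}+\underline{K}$, $L=\underline{L}$ together with Lemma~\ref{lemma:1} for the converse. Your appeal to the block-triangular structure of~\eqref{eq:errors} and the spanning argument for $\mathcal{X}_e$ spell out details the paper leaves implicit, in particular that $A-\underline{L}C$ must also be Schur.
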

\begin{proof}
    If ($i$) holds, it is straightforward to verify that the choice ${\overline{L}=\underline{L}=L}$, ${\overline{K}=\mathbf{0}}$ and ${\underline{K}=K}$ satisfies the conditions in Lemma \ref{lemma:1} and stabilizes the dynamics, implying~($ii$). For the converse direction, ($ii$) implies that we have ${\rho(A+B(\overline{K}+\underline{K}))<1}$ and, according to Lemma~\ref{lemma:1}, ${A+B(\overline{K}+\underline{K})\ge0}$. Thus, taking ${K=\overline{K}+\underline{K}}$ and $L=\underline{L}$ implies that ($i$) holds. 
\end{proof}
\begin{remark}
    Note that the matrices $K$ and $L$ satisfying the requirements of the first point of Theorem \ref{th:stable} can be found independently and under the conditions for positive control and observation, respectively characterized in \cite{aitrami07control}, \cite{aitrami06observation}. As shown in these works, each problem is equivalent to feasibility of a linear program.
\end{remark}
The following corollary is a direct consequence of \eqref{subeq:BK} in Lemma~\ref{lemma:1}. 
\begin{corollary}\label{co:lower}
    The stabilizing observer of Theorem 1, if it exists, can always be realized with $\overline{K} = \mathbf{0}$. 
\end{corollary}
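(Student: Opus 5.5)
Given any stabilizing observer-feedback configuration $(\overline{L},\underline{L},\overline{K},\underline{K})$ as in Theorem~\ref{th:stable}~$(ii)$, I will construct a new one with $\overline{K}=\mathbf{0}$ that still satisfies every condition of Lemma~\ref{lemma:1} and is still stabilizing. The natural candidate is the one from the proof of Theorem~\ref{th:stable}: keep the observer gains, and move all feedback onto the lower estimate. Concretely, set $\overline{L}'=\overline{L}$, $\underline{L}'=\underline{L}$, $\overline{K}'=\mathbf{0}$ and $\underline{K}'=\overline{K}+\underline{K}$.

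First I check the conditions \eqref{eq:conditions} for the new gains.
\begin{itemize}
\item \eqref{subeq:ABKK} is unchanged, since $\overline{K}'+\underline{K}'=\overline{K}+\underline{K}$.
\item \eqref{subeq:ABK} becomes $A\ge0$, which holds by assumption.
\item \eqref{subeq:BK} becomes $\mathbf{0}\ge0$.
\item \eqref{subeq:ALCu} and \eqref{subeq:ALCl} are unchanged.
\item \eqref{subeq:BKLC} becomes $\underline{L}C\ge0$.
\end{itemize}
The last item is the only point needing care. In the setting of Theorem~\ref{th:stable}, $\underline{L}C\ge0$ is part of the hypothesis, so it holds directly.

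More generally, one can argue directly from \eqref{subeq:BK}, which is what the statement alludes to. Since $B\overline{K}\ge0$ for any admissible original design, the original condition $B\overline{K}+\underline{L}C\ge0$ is weaker than $\underline{L}C\ge0$. Removing $\overline{K}$ therefore only needs $\underline{L}C\ge0$. I expect this to be the main obstacle in the coupled case without that hypothesis. If it arises, I would first fall back on statement $(i)$ of Theorem~\ref{th:stable}, which supplies an $L$ with $LC\ge0$, and take $\overline{L}'=\underline{L}'=L$.

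Second, I verify stability. From the block-triangular error dynamics \eqref{eq:errors}, the spectrum of the closed loop is the union of the spectra of $A+B(\overline{K}'+\underline{K}')$, $A-\overline{L}'C$ and $A-\underline{L}'C$. Since the gains enter only through $\overline{K}+\underline{K}$ and the observer gains, these three blocks are identical to the original ones (or to the Schur matrices from $(i)$ in the fallback). Hence the new closed loop is Schur stable. Invariance of $\mathcal{X}$ then follows from Lemma~\ref{lemma:1}, which completes the argument.
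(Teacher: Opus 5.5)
Your proof is correct and follows the paper's own route, which is the composition of the two directions of Theorem~\ref{th:stable}'s proof: $K=\overline{K}+\underline{K}$, $L=\underline{L}$, then $\overline{K}=\mathbf{0}$, $\underline{K}=K$. Keeping the original $\overline{L}$ rather than resetting it to $\underline{L}$ changes nothing. Drop the speculative fallback for the case without $\underline{L}C\ge0$: the corollary does not need it, and it would fail there. In that case statement $(i)$ of Theorem~\ref{th:stable} is not available, and Example~1 shows the corollary genuinely fails, since there $\underline{L}C\ge0$ forces $\underline{L}=\mathbf{0}$.
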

This alleviates the need for any prior knowledge of the state in order to initialize a stabilizing observer in $\mathcal{X}$, by simply setting $\overline{K}=\mathbf{0}$ and $\underline{x}(0) = \mathbf{0}$. 

\section{Relaxed conditions}

A comparison between the conditions in Lemma~\ref{lemma:1} and the requirements for existing synthesis methods for positive observers~\cite{aitrami06observation} and positivity-preserving linear feedback~\cite{aitrami07control} shows several interesting discrepancies. Structuring the observer as lower and upper estimates and considering the full closed-loop dynamics \eqref{eq:fullx} removes and relaxes some of the constraints required to preserve positivity when treating the problems of positive control and estimation separately. Specifically, for a positive system \eqref{eq:sys} with feedback $u=K\hat{x}$ based on an unstructured positive observer on the form \eqref{eq:obs}, the following (among other conditions) are necessary{\cite{aitrami06observation}}:
\begin{subequations}\label{eq:generic}
    \begin{align}
        A+BK&\ge0\label{subeq:clgain}\\
        BK&\ge0\label{subeq:fbgain}\\
        LC&\ge0.\label{subeq:obsgain}
    \end{align}
\end{subequations}
In the combined formulation described in the previous section, \eqref{subeq:fbgain} holds only for the feedback gain $\overline{K}$ from the upper estimate~$\overline{x}$. It is this relaxation that enables stabilization in Theorem~\ref{th:stable}. Next, we analyze the consequences of relaxing the conditions \eqref{subeq:clgain} and \eqref{subeq:obsgain}. 

Unlike the generic case \eqref{eq:generic}, the observer gain $\overline{L}$ in Lemma~\ref{lemma:1} has no additional restriction apart from Schur stability and nonnegativity of the closed loop error dynamics $A-\overline{L}C$. In the decoupled setting of Theorem~\ref{th:stable} this is of limited use, since Corollary~\ref{co:lower} shows that asymptotic stability of the system depends solely on feedback from the lower estimate $\underline{x}$. In general, however, the observer gain for the lower estimate is subject to the lighter (compared to \eqref{subeq:obsgain}) requirement \eqref{subeq:BKLC}, which relaxes previous conditions for the existence of positive observers.  We illustrate this result with the following simple example.

\begin{example}
    Consider the two-state system 
    \begin{equation}
    \begin{aligned}\label{eq:exsys}
        x(t+1) &= \begin{bmatrix} 1.2 & 0.2 \\ 0 & 0.2 \end{bmatrix}x(t) + u(t)\\
        y(t) &= \begin{bmatrix}
            1 & -1
        \end{bmatrix}x(t).
    \end{aligned}
    \end{equation}
    The only admissible observer gain under \eqref{subeq:obsgain} is, trivially, ${L=\mathbf{0}}$. Regardless of this, the following observer and feedback gains give stabilization using positive estimates:
    \begin{equation}\label{eq:exgains}
        \!\!\overline{L} = \underline{L} = \begin{bmatrix}
            0.3 \\ 0
        \end{bmatrix}\!, \;\; \overline{K} = \begin{bmatrix}
            0 & 0.3 \\ 0 & 0
        \end{bmatrix}\!, \;\; \underline{K} = \begin{bmatrix}
            -0.3 & 0 \\ 0 & 0
        \end{bmatrix}\!.
    \end{equation}
    Insertion shows that the gains in \eqref{eq:exgains} satisfy \eqref{eq:conditions} and give Schur stable closed-loop and error dynamics. The evolution of the first state and corresponding estimates are plotted in Figure \ref{fig:countersim}. This also demonstrates the fact that Corollary~\ref{co:lower} is restricted to the setting of Theorem~\ref{th:stable} and does not hold in general; some systems require feedback from the upper estimate to enable stabilization using positive observers.
\end{example}

\begin{figure}
    \centering
    \begin{tikzpicture}
\begin{axis}[
    width=8cm, height=6cm,
    xmin=0,
    xlabel={Time $t$},
    ylabel={Magnitude},
    grid=both,
    axis lines=left,
    legend style={at={(0.98,0.42)},anchor=south east}
]

\addplot[blue!70!black, thick] table [x index=0, y index=1, col sep=comma] {state_bounds.csv};
\addlegendentry{$x_1$}

\addplot[red!70!black, dashed] table [x index=0, y index=2, col sep=comma] {state_bounds.csv};
\addlegendentry{$\overline{x}_1,\; \underline{x}_1$}

\addplot[red!70!black, dashed] table [x index=0, y index=3, col sep=comma] {state_bounds.csv};

\end{axis}
\end{tikzpicture}
    \caption{Evolution of the first state component~$x_1$ of~\eqref{eq:exsys} and corresponding upper and lower estimates $\overline{x}_1$ and~$\underline{x}_1$ with observer and feedback gains according to~\eqref{eq:exgains}. The component~$x_1$ contains the unstable mode in~\eqref{eq:exsys}, which is stabilized by feedback from the lower estimate when the observer state approaches the true value $x$. Each element of the initial state $x(0)$ is drawn randomly from the interval $\left[0,1\right]$, with estimates initialized to ${\overline{x}(0)=\mathbf{1}}$ and ${\underline{x}(0)=\mathbf{0}}$, dictating the transient performance. The chosen gains stabilize the errors and dynamics.}
    \label{fig:countersim}
\end{figure}
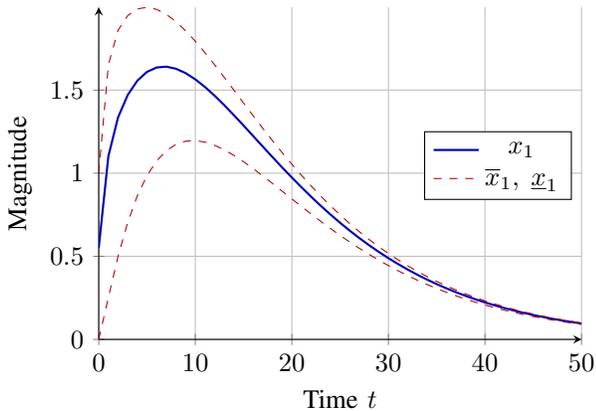

\subsection{Feedback positivization}

The closed-loop gain requirements~\eqref{subeq:ABKK} and~\eqref{subeq:ABK} only apply to the upper feedback $A+B\overline{K}$ and the combined feedback ${A+B(\overline{K}+\underline{K})}$. While we have so far been working under the assumption of positive autonomous dynamics $A\ge0$, this is not required for Lemma~\ref{lemma:1} to hold and can be lifted while maintaining the invariance of $\mathcal{X}$. We can interpret this as the feedback from the upper bound "positivizing" the system, in the sense that the previous condition on $A\ge0$ on the autonomous dynamics is replaced by~\eqref{subeq:ABK}. This is similar to the analysis in \cite{briat11ILC} where, although no observers are employed, feedback from the system output is used to make the closed-loop dynamics positive. 

In applications where we desire to keep the state nonnegative but the system~\eqref{eq:sys} is not inherently positive, Lemma~\ref{lemma:1} provides a means to enforce such behavior as long as the conditions~\eqref{eq:conditions} can be satisfied. A frequent application of the theory of positive systems is the setting of logistics networks, where it may be critical to ensure that no part of the network is at a deficit. Furthermore, methods for optimal control such as those proposed in~\cite{ohlin25heuristic} require positivity of the state to ensure well-posedness of the optimization problem. Positivization of the dynamics using feedback from an upper estimate provides a means to apply these results even when positivity of the original system is not guaranteed. 

\begin{example}
    Modifying the system in Example~1, we instead consider the dynamics
    \begin{equation}
        \begin{aligned}\label{eq:ex2sys}
        x(t+1) &= \begin{bmatrix} 1.2 & 0.2 \\ -0.1 & 0.2 \end{bmatrix}x(t) + u(t)\\
        y(t) &= \begin{bmatrix}
            1 & -1
        \end{bmatrix}x(t).
    \end{aligned}
    \end{equation}
    We no longer have $A\ge0$, but can nevertheless find gains such that Lemma~\ref{lemma:1} applies by making minor alterations to the observer and control design~\eqref{eq:exgains}. Choosing
    \begin{equation}\label{eq:ex2gains}
        \!\!\overline{L} = \underline{L} = \begin{bmatrix}
            0.3 \\ -0.1
        \end{bmatrix}\!, \;\; \overline{K} = \begin{bmatrix}
            0 & 0.3 \\ 0.1 & 0
        \end{bmatrix}\!
    \end{equation}
    and keeping $\underline{K}$ as in~\eqref{eq:exgains}, we see that the new observer gain~$L$ ensures positivity of the error dynamics~\eqref{subeq:ALCu}-\eqref{subeq:ALCl} while maintaining stability. Meanwhile, $\overline{K}$ is increased to ensure positivity in~\eqref{subeq:ABKK}, without destabilizing the closed loop dynamics.
\end{example}

\begin{remark}
    We note the distinct roles of the upper and lower estimates $\overline{x}$ and $\underline{x}$ in the presented construction.  This constitutes a natural dual to the monotone dynamics of upper and lower bounds on the linear copositive Lyapunov function ${V(x)=\lambda^\top x}$ under value iteration \cite{ohlin25heuristic}. 
\end{remark}

\section{Stochastic noise}

The above results rely on unperturbed dynamics and measurements in the system \eqref{eq:sys}. In order to understand the performance of positive observers in a more realistic setting, we examine the system
\begin{equation}\label{eq:noisys}
    \begin{aligned}
        x(t+1) &= Ax(t) + Bu(t) + Ew(t)\\
        y(t) &= Cx(t) + Fv(t)
    \end{aligned}
\end{equation}
where $w$ and $v$ are positive stochastic noises, in keeping the the convention of positive signals. Since we assume that the autonomous dynamics are positive, the process noise is restricted, letting ${E\ge0}$. No restriction is placed on~$F$, allowing for measurement noise in both directions. Further, let the noises be sampled from some fixed distribution with unit first moments ${\mathbb{E}\left[w(t)\right]=\mathbf{1}}$ and ${\mathbb{E}\left[v(t)\right]=\mathbf{1}}$, so that the noise level is modeled completely by the matrices~$E$ and~$F$. Note that the above assumptions on the disturbances are lighter than those of~\cite{briat11ILC}.

Without detailed knowledge of the structure of the noise, it is no longer possible to guarantee invariance of $\mathcal{X}$ at every time step. We can still, however, characterize the expected behavior of the system using our previous analysis for the deterministic case. For this purpose, denote the expected value of the state and estimates as
\begin{equation}\label{eq:moment}
    X(t) = \mathbb{E}\left(\begin{bmatrix}
        x(t) \\ \overline{x}(t) \\ \underline{x}(t)
    \end{bmatrix}\right).
\end{equation}

The following theorem gives conditions for stability and positivity in expectation.
\begin{theorem}\label{th:noise}
    Consider the system~\eqref{eq:noisys} with feedback law ${u(t) = \overline{K}\;\overline{x}(t) + \underline{K}\;\underline{x}(t)}$. Let the upper and lower estimates according to Definition~\ref{def:estimates} be initialized so that ${X(0)\in\mathcal{X}}$. Then, the following are equivalent:
    \begin{itemize}
        \item[($i$)] The {closed-loop dynamics $A + B(\overline{K} + \underline{K})$, $A-\overline{L}C$ and $A-\underline{L}C$ are Schur.} The feedback and observer gains satisfy~\eqref{eq:conditions} and
            \begin{subequations}\label{eq:newconditions}
            \begin{align}
                \overline{L}F\mathbf{1}-E\mathbf{1}&\ge0\label{subeq:LFE}\\
                E\mathbf{1}-\underline{L}F\mathbf{1}&\ge0\label{subeq:ELF}\\
                \underline{L}F\mathbf{1}&\ge0\label{subeq:LF}
            \end{align}
            \end{subequations}
        \item[($ii$)] 
        The expected state $X(t)$ lies invariantly in~$\mathcal{X}$ and converges asymptotically to a fixed point ${X^*\in\mathcal{X}}$ for all initial states~$x(0)\in\mathbb{R}^n_+$. 
    \end{itemize}
\end{theorem}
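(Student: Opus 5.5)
Since the feedback~\eqref{eq:u} is linear and the noises enter additively with known means, the plan is to take expectations in~\eqref{eq:noisys} together with the observer updates. This gives a deterministic affine recursion for $X(t)$ in~\eqref{eq:moment}. Writing the expected errors $\overline{\varepsilon}=\mathbb{E}[\overline{x}-x]$ and $\underline{\varepsilon}=\mathbb{E}[x-\underline{x}]$ and the expected state $\xi=\mathbb{E}[x]$, linearity and $\mathbb{E}[w]=\mathbb{E}[v]=\mathbf{1}$ give
\begin{equation*}
\begin{bmatrix}\xi\\ \overline{\varepsilon}\\ \underline{\varepsilon}\end{bmatrix}^+ = M\begin{bmatrix}\xi\\ \overline{\varepsilon}\\ \underline{\varepsilon}\end{bmatrix} + \begin{bmatrix}E\mathbf{1}\\ \overline{L}F\mathbf{1}-E\mathbf{1}\\ E\mathbf{1}-\underline{L}F\mathbf{1}\end{bmatrix},
\end{equation*}
where $M$ is the block upper-triangular matrix of~\eqref{eq:errors}. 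Equivalently, in the original coordinates $X^+ = \mathcal{A}X + d$, with $\mathcal{A}$ the matrix of~\eqref{eq:fullx} and $d = (E\mathbf{1},\, \overline{L}F\mathbf{1},\, \underline{L}F\mathbf{1})$. Invariance of $\mathcal{X}$ for $X$ is then the same as invariance of $\mathcal{X}_e$ for the expected error coordinates under this affine map.

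For (i)$\Rightarrow$(ii), I would show that the affine map sends $\mathcal{X}_e$ into itself. The homogeneous part does so by Lemma~\ref{lemma:1}, so it suffices that the constant term lies in $\mathcal{X}_e$, because $\mathcal{X}_e$ is a convex cone. This means checking four things:
\begin{itemize}
\item[] $E\mathbf{1}\ge0$, which holds since $E\ge0$;
\item[] $\overline{L}F\mathbf{1}-E\mathbf{1}\ge0$, which is~\eqref{subeq:LFE};
\item[] $E\mathbf{1}-\underline{L}F\mathbf{1}\ge0$, which is~\eqref{subeq:ELF};
\item[] $\underline{\varepsilon}$-component $\le$ $\xi$-component, i.e. $E\mathbf{1}-\underline{L}F\mathbf{1}\le E\mathbf{1}$, which is~\eqref{subeq:LF}.
\end{itemize}
Convergence follows because $M$ is block triangular with Schur diagonal blocks $A+B(\overline{K}+\underline{K})$, $A-\overline{L}C$ and $A-\underline{L}C$, so $\rho(M)<1$. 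Hence $X(t)\to X^*=(I-\mathcal{A})^{-1}d$. Finally, $X^*\in\mathcal{X}$ because $\mathcal{X}$ is closed and the trajectory stays in it.

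For (ii)$\Rightarrow$(i), the strategy is the necessity argument of Lemma~\ref{lemma:1}, adapted to the affine setting.
\begin{itemize}
\item[] \emph{Schur stability.} Convergence to a single fixed point for every $x(0)\in\mathbb{R}^n_+$, with all admissible initializations, should force $\rho(M)<1$. The differences of two trajectories obey the homogeneous dynamics, and the set of initial differences spans the whole space, since $\mathcal{X}_e$ has nonempty interior. Block triangularity then gives the three Schur conditions.
\item[] \emph{Conditions~\eqref{eq:conditions}.} Invariance under the affine map for all starting points in the cone, after scaling $X\mapsto sX$ with $s\to\infty$, implies invariance of the cone under the linear part. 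Lemma~\ref{lemma:1} then yields~\eqref{eq:conditions}.
\item[] \emph{Conditions~\eqref{eq:newconditions}.} Start at the apex $X(0)=0$, i.e. $x(0)=\overline{x}(0)=\underline{x}(0)=\mathbf{0}$. One step gives exactly the constant term, which must lie in $\mathcal{X}_e$; this yields~\eqref{subeq:LFE}--\eqref{subeq:LF}.
\end{itemize}

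I expect the main obstacle to be this converse direction, specifically the scaling/recession argument and the spanning claim. Statement (ii) quantifies only over nonnegative initial states with $X(0)\in\mathcal{X}$, so I must verify that these initial conditions are rich enough, namely that the cone has full dimension, to pin down both the linear invariance and the spectral radius. The forward direction is essentially Lemma~\ref{lemma:1} plus bookkeeping of the noise offset.
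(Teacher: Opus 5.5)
Your proposal is correct and follows essentially the same route as the paper: you pass to the expected error coordinates with constant offset $(E\mathbf{1},\,\overline{L}F\mathbf{1}-E\mathbf{1},\,E\mathbf{1}-\underline{L}F\mathbf{1})$ and use Lemma~\ref{lemma:1} plus the cone property for (i)$\Rightarrow$(ii); for the converse you start at the apex to get \eqref{eq:newconditions}, transfer invariance to the linear part and invoke Lemma~\ref{lemma:1} for \eqref{eq:conditions}, and read Schur stability off the block-triangular structure. Your scaling and spanning arguments make rigorous two steps the paper only asserts, and your full-dimensionality worry resolves at once since $\mathcal{X}$ has interior points (e.g.\ $\underline{x}=\mathbf{1}$, $x=2\cdot\mathbf{1}$, $\overline{x}=3\cdot\mathbf{1}$); like the paper, you rely on reading (ii) as convergence to a single fixed point independent of the initialization, which is the reading needed for the Schur conditions to be necessary.
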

\begin{proof}
    Define the expected error state 
    \begin{equation}
        X_e = \mathbb{E}\left(\begin{bmatrix}
        x \\ \overline{e} \\ \underline{e}
    \end{bmatrix}\right).
    \end{equation}
    The closed-loop dynamics of $X_e$ are given by
    \begin{equation}\label{eq:xeplus}
        X_e^+ = GX_e + \begin{bmatrix}
            E & \mathbf{0} \\ -E & \overline{L}F \\ E & -\underline{L}F
        \end{bmatrix}\mathbf{1}
    \end{equation}
    where $G$ is the closed-loop matrix of~\eqref{eq:errors}. 
    
    (${i\textnormal{)}\implies\textnormal{(}ii}$): Given ($i$), $G$ is Schur stable, which means that the affine equation~\eqref{eq:xeplus} converges asymptotically to a fixed point. It remains to show invariance of ${X\in\mathcal{X}}$.
    We inspect~\eqref{eq:xeplus}, observing that the left-hand term lies in~$\mathcal{X}_e$ according to Lemma~\ref{lemma:1}. The conditions~\eqref{subeq:LFE} and~\eqref{subeq:ELF} are sufficient to ensure nonnegativity of the right-hand term. Applying~\eqref{subeq:LF} to~\eqref{eq:xeplus} shows that the third block element of the right-hand term is less that or equal to the first. In other words, the right-hand term in~\eqref{eq:xeplus} lies in~$\mathcal{X}_e$. Recall that~$\mathcal{X}_e$ is a cone, so the sum of two members must also lie in~$\mathcal{X}_e$. Since ${X_e\in\mathcal{X}_e\iff X\in\mathcal{X}}$ and $\mathcal{X}$ is closed, it holds that ${X^*\in\mathcal{X}}$.
    
    (${ii\textnormal{)}\implies\textnormal{(}i}$): It is clear from~\eqref{eq:xeplus} that stability of the block diagonal elements of $G$ (see~\eqref{eq:errors}) is necessary. Inserting ${\mathbb{E}[\,\overline{e}\,]=\mathbb{E}[\,\underline{e}\,]=\mathbb{E}[x]=\mathbf{0}}$ into~\eqref{eq:xeplus} directly yields necessity of~\eqref{eq:newconditions}. To see the necessity of~\eqref{eq:conditions}, consider a choice of observer gains that violate~\eqref{eq:conditions}. By Lemma~\ref{lemma:1}, this means that $\mathcal{X}_e$ is not invariant under the transformation $G$. Since the right-hand term of~\eqref{eq:xeplus} is constant and $\mathcal{X}_e$ is a cone, linearity then implies that we can find some ${X_e\in\mathcal{X}_e}$ such that ${X_e^+\notin\mathcal{X}_e}$. This contradicts the statement of~($ii$), leading to the conclusion that~\eqref{eq:conditions} is indeed necessary.
\end{proof}

\begin{remark}
    In the analysis of previous sections there was little to motivate a separate choice of upper and lower observer gains. The lighter requirements on $\overline{L}$ in~\eqref{eq:conditions} allowed for a minor improvement in the convergence rate of the upper bound, but the essential stability and positivity properties of the system were the same as for ${\overline{L}=\underline{L}}$. In the noisy setting~\eqref{eq:noisys} this separation has stronger motivation; indeed, the conditions~\eqref{eq:newconditions} are not satisfied with the choice $\overline{L}=\underline{L}$ for any nonzero noise dynamics $E$ and $F$.
\end{remark}

We showcase the behavior of a noisy system on the form~\eqref{eq:noisys} stabilized using observer feedback according to Theorem~\ref{th:noise} in the following example.

\begin{example}
    Consider the system
    \begin{equation}\label{eq:ex3sys}
    \begin{aligned}
        x(t+1) &= \begin{bmatrix} 0.9 & 0.2 \\ 0.5 & 0.2 \end{bmatrix}x(t) + u(t) + 0.02w(t)\\
        y(t) &= \begin{bmatrix}
            1 & -1
        \end{bmatrix}x(t) + 0.06v(t)
    \end{aligned}
    \end{equation}
    Gains satisfying~\eqref{eq:conditions} and~\eqref{eq:newconditions} are given by
    \begin{equation}
        \begin{aligned}
            \overline{L} &= \begin{bmatrix}
                0.6 \\ 0.5
            \end{bmatrix}, \;\;\;\;\;\;\;\; \underline{L} = \begin{bmatrix}
                0.2 \\ 0.2
            \end{bmatrix}\\
            \overline{K} &= \begin{bmatrix}
                0 & 0.3 \\ 0 & {0.2} 
            \end{bmatrix}, \;\; \underline{K} = \begin{bmatrix}
                -0.3 & 0 \\ 0 & 0 
            \end{bmatrix}.
        \end{aligned}
    \end{equation}
    Figure~2 shows the trajectories of the first component of the state and estimates. The positive noises $w(t)$ and $v(t)$ are independently drawn from a gamma distribution with unit mean and scale parameter.
\end{example}

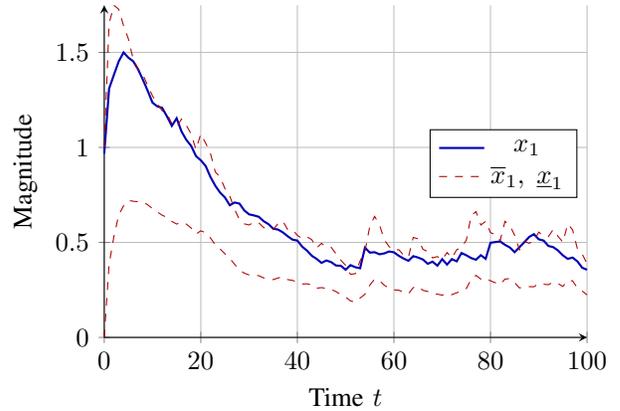
\begin{figure}
    \centering
    \begin{tikzpicture}
\begin{axis}[
    width=8cm, height=6cm,
    xmin=0,
    xlabel={Time $t$},
    ylabel={Magnitude},
    grid=both,
    axis lines=left,
    legend style={at={(0.98,0.42)},anchor=south east}
]

\addplot[blue!70!black, thick] table [x index=0, y index=1, col sep=comma] {state_bounds_noisy.csv};
\addlegendentry{$x_1$}

\addplot[red!70!black, dashed] table [x index=0, y index=2, col sep=comma] {state_bounds_noisy.csv};
\addlegendentry{$\overline{x}_1,\; \underline{x}_1$}

\addplot[red!70!black, dashed] table [x index=0, y index=3, col sep=comma] {state_bounds_noisy.csv};

\end{axis}
\end{tikzpicture}
    \caption{Trajectory of the first state component $x_1$ and corresponding bounds for the system in Example~3. Due to the process and measurement noise, the bounds are no longer guaranteed to hold at every time step $t$. Regardless, invariance ${X\in\mathcal{X}}$ of the expected behavior is sufficient to guarantee asymptotic convergence to a bounded steady state, despite instability of the autonomous dynamics~\eqref{eq:ex3sys}.}
    \label{fig:noisim}
\end{figure}

\section{Conclusion}

We have demonstrated the elementary properties of positive Luenberger-type observers for linear positive systems. Building on earlier results, we have shown that it is possible to stabilize such systems using linear observer feedback without prior knowledge of the state. The proposed approach naturally divides observers into upper and lower estimates as a consequence of the monotonicity of positive dynamics. We further show that, when considering the problems of positive observation and control jointly, our method provides guarantees on stability under feedback from the state estimate for systems not admissible under the constraints of previous observer designs. The introduction of stochastic disturbances yields a second set of linear inequalities restricting the choice of observer gains. This motivates the use of separate observer dynamics to guarantee positive stabilization in the presence of process and measurement noise.

We believe that these results are an important step towards constructing a theory of optimal control and estimation for positive systems. It is our hope that ongoing work on the duality between the construction of positive observers and closed-form solutions to optimal control will yield a framework for positive systems, mirroring the classical results of LQG for control on invariant semidefinite cones.



\bibliographystyle{IEEEtran}
\bibliography{references}

\end{document}